\newenvironment{quiet-proof}{}{\qed}
\newenvironment{Remark}{\noindent\textsl{Remark.}}{}
\newenvironment{Remarks}{\noindent\textsl{Remarks.}}{}
\newcommand{\Natural}{\mathbb N}
\newcommand{\Real}{\mathbb R}
\newcommand{\Sphere}{\mathbb S}
\newcommand{\Abs}[1]{\left\vert #1 \right\vert}
\newcommand{\Card}[1]{\abs{#1}}
\newcommand{\To}{\rightarrow}
\newcommand{\Order}{\operatorname{O}}
\newcommand{\potential}{u}
\newcommand{\Potential}{U}
\newcommand{\Energy}{\operatorname{E}}
\newcommand{\Discrepancy}{\operatorname{\mathcal{D}}}
\newcommand{\Disc}{\operatorname{\delta}}
\newcommand{\MinDistBound}{\Delta}
\newcommand{\dist}{\operatorname{\mathtt{dist}}}
\newcommand{\diam}{\operatorname{\mathtt{diam}}}
\newcommand{\Ball}{B}
\newcommand{\Integral}{\mathcal{I}}
\newcommand{\MeanPotential}{\Phi}
\newcommand{\Measure}{\lambda}
\newcommand{\NMeasure}{\sigma}
\newcommand{\RadiusH}{\rho}
\newcommand{\Volume}{\mathcal{V}}
\newcommand{\noemph}[1]{#1}
\begin{document}

\title{Discrepancy, separation and Riesz energy of finite point sets on compact connected Riemannian manifolds}

\author{Leopardi}{Paul}{a}

\affiliation{a}{Mathematical Sciences Institute, Australian National University, Canberra, Australia}

\firstpage{1}

\maketitle

%
\begin{abstract}
\noindent
On a smooth compact connected $d$-dimensional Riemannian manifold $M$,
if $0 < s < d$ then
an asymptotically equidistributed sequence of finite subsets of $M$
that is also well-separated yields a sequence of Riesz $s$-energies that converges
to the energy double integral, 
with a rate of convergence depending on the geodesic ball discrepancy.
This generalizes a known result for the sphere.
%
\end{abstract}

%
\section{Introduction and Main Results}

This paper arises from a remark at the end of the related paper~\cite{Leo13} on separation,
discrepancy and energy on the unit sphere, that the results of Bl\"umlinger~\cite{Blu90}
could be used to generalize the results given there.
The main result of that paper is that,
for the unit sphere $\Sphere^d \in \Real^{d+1},$ with $d \geqslant 2,$ 
if $0 < s < d$ then
an asymptotically equidistributed sequence of spherical codes 
that is also well-separated yields a sequence of Riesz $s$-energies that converges
to the energy double integral, 
with the rate of convergence depending on the spherical cap discrepancy~\cite[Theorem 1.1]{Leo13}.
This paper generalizes that result to the setting of the volume measure on a Riemannian manifold,
with a potential based on geodesic distance.

The relationships between discrepancy and energy of measures on a manifold have been studied for a long time,
in various settings, and there is an extensive literature, including works by
Benko, Damelin, Dragnev, Hardin, Hickernell, Ragozin,
Saff, Totik,  Zeng and many others~\cite{BenDD12,DamHRZ10,HarS05,SafT97}.
(See also the bibliography of the related work on the unit sphere~\cite{Leo13}
for further references specific to that setting.) 
Many of these works have concentrated on equilibrium measure~\cite{BenDD12,DamHRZ10,SafT97}
and on manifolds embedded in Euclidean space,
with a potential based on Euclidean distance~\cite{BenDD12,HarS05}.
In contrast, this paper focuses on the volume measure on a Riemannian manifold,
with a potential based on geodesic distance.
As a consequence, many results from the literature, concerning, e.g. the support of
an equilibrium measure~\cite{BenDD12} do not apply here.
Instead, this paper takes the approach of translating the methods used in~\cite{Leo13}
to the setting of Riemannian geometry.

For $d \geqslant 1$ let $M$ be a smooth connected $d$-dimensional Riemannian manifold,
without boundary, with metric $g$ and geodesic distance $\dist$, 
such that $M$ is compact in the metric topology of $\dist$.
Let $\diam(M)$ be the \emph{diameter} of $M,$
the maximum geodesic distance between points of $M.$
Let $\Measure_M$ be the volume measure on $M$ given by 
the volume element corresponding to the metric $g$. 
Since $M$ is compact, it has finite diameter and finite volume.
Let $\NMeasure_M$ be the probability measure $\Measure_M / \Measure_M(M)$ on $M$.
For the remainder of this paper, all compact connected Riemannian manifolds are 
assumed to be finite dimensional, smooth and without boundary, unless otherwise noted.

This paper concerns infinite sequences $\mathcal{X} := (X_1, X_2, \ldots)$
of finite subsets of the manifold $M.$
Each such finite subset is called an $M$-\emph{code}, 
by analogy with spherical codes, which are finite subsets of the unit sphere $\Sphere^d.$
A sequence $(X_1, X_2, \ldots)$ whose corresponding sequence
of cardinalities $(\Card{X_1}, \Card{X_2}, \ldots)$ diverges to $+\infty$
is called a \emph{pre-admissible} sequence of $M$-codes.

For any probability measure $\mu$ on $M$, the \emph{normalized ball discrepancy} is
\begin{align*}
\Discrepancy (\mu)
&:=
\sup_{x \in M,\ r > 0} \Abs{\mu\big(\Ball(x,r)\big) - \NMeasure_M\big(\Ball(x,r)\big)},
\end{align*}
where 
$\Ball(x,r)$ is the geodesic ball of radius $r$ about the point $x$~\cite{Blu90,DamG03}.

An $M$-code $X$ with cardinality $\Card{X}$ has a corresponding probability measure $\NMeasure_X$ and
normalized ball discrepancy $\Discrepancy(X),$
where for any measurable subset $S \subset M,$ 
\begin{align*}
\NMeasure_X(S) &:= \Card{S \cap X}/\Card{X},
\end{align*}
and 
\begin{align*}
\Discrepancy(X) &:= \Discrepancy(\NMeasure_X) 
= \sup_{y \in M,\ r > 0} \Abs{\Card{\Ball(y,r) \cap X}/\Card{X} - \NMeasure_M\big(\Ball(y,r)\big)}.
\end{align*}
It is easy to see that $\Discrepancy(X) \geqslant 1/\Card{X},$ since for any $x \in X,$ 
$\NMeasure_M\big(\Ball(x,r)\big)$ can be made arbitrarily small by taking $r \To 0,$
while $\NMeasure_X\big(\Ball(x,r)\big)$ must always remain at least $1/\Card{X},$
since the ball $\Ball(x,r)$ contains the point $x \in X.$

A pre-admissible sequence $\mathcal{X} := (X_1, X_2, \ldots),$ of $M$-codes 
with corresponding cardinalities $N_{\ell}:=\Card{X_{\ell}}$
is \emph{asymptotically equidistributed}~\cite[Remark 4, p. 236]{DamG03},
if the normalized ball discrepancy is bounded above as per
\begin{align}
\Discrepancy (X_{\ell}) < \Disc(N_{\ell}),
\label{discrepancy-bound-def}
\end{align}
where $\Disc : \Natural \To (0,1],$ is a positive decreasing function with $\Disc(N) \To 0$ as $N \To \infty$.

By the \emph{minimum geodesic distance} of a code $X$, we mean the minimum, 
over all pairs $(x,y)$ of distinct code points in $X,$ of the geodesic distance $\dist(x,y).$
The pre-admissible sequences of $M$-codes 
of most interest for this paper are those such that the minimum geodesic distance
is bounded below as per
\begin{align}
\dist(x,y) &> \MinDistBound(N_{\ell})
\quad\text{for all}\ x, y \in X_{\ell},
\label{separation-bound-def}
\end{align}
where $\MinDistBound : \Natural \To (0,\infty),$ is a positive decreasing function with $\MinDistBound(N) \To 0$ as $N \To \infty$.

Flatto and Newman~\cite[Theorem 2.2]{FlaN77}, 
in the case where the manifold $M$ is $C^4$ rather than smooth,
showed that there exists a positive constant $\gamma,$ depending on $M,$
such that a sequence of $M$-codes exists with $\MinDistBound(N_{\ell}) = \gamma N_{\ell}^{-1/d}$.
In the case of smooth manifolds, as treated here, we call such a sequence of $M$-codes
\emph{well separated} with \emph{separation constant} $\gamma.$

An easy area argument shows that the order $\Order(N^{-1/d})$ is best possible,
in the sense that, for any sequence of $M$-codes,
any applicable lower bound of the form~\eqref{separation-bound-def}
is itself bounded above by
\begin{align*}
\MinDistBound(N_{\ell}) &= \Order(N_{\ell}^{-1/d}),
%
\end{align*}
(as $\ell \To \infty$).

For the purposes of this paper, we define an \emph{admissible sequence} of $M$-codes
to be a pre-admissible sequence $\mathcal{X},$ such that a discrepancy function $\Disc$ 
and a separation function $\MinDistBound$ exist,
satisfying the bounds~\eqref{discrepancy-bound-def} and~\eqref{separation-bound-def} respectively.

For $0 < s < d$, the \emph{normalized Riesz $s$-energy} of an $M$-code $X$ is
$\Energy_X\,\Potential^{(s)}$,
where $\Energy_X$ is the normalized discrete energy functional
\begin{align*}
\Energy_X\,\potential &:=
\frac{1}{\Card{X}^2} \sum_{x \in X} \mathop{\sum_{y \in X}}_{y\neq x} \potential \left(\dist(x,y)\right),
\end{align*}
for $u : (0,\infty) \To \Real,$
and $\Potential^{(s)}(r) := r^{-s},$ the Riesz potential function,
for $r \in (0,\infty).$

The corresponding normalized continuous energy functional is given by the double integral~\cite{DamLRS09,HarR03}
\begin{align*}
\Energy_M \potential 
&:=
\int_M \int_M \potential \left(\dist(x,y)\right) d\NMeasure_M(y)\,d\NMeasure_M(x).
\end{align*}

The main result of this paper is the following theorem.
\begin{theorem}\label{manifold-discrepancy-separation-energy-theorem}
Let $M$ be a compact connected $d$-dimensional Riemannian manifold.
If $0 < s < d$ then,
for a well separated admissible sequence $\mathcal{X}$ of $M$-codes,
the normalized Riesz $s$-energy converges to the energy double integral
of the normalized volume measure $\NMeasure_M$ as $\Card{X_{\ell}} \To \infty$. 
The rate of convergence of the energy difference is of order $\Disc(\Card{X_{\ell}})^{(1-s/d)/(d+2-s/d)},$
where $\Disc(\Card{X_{\ell}})$ is the upper bound on the geodesic ball discrepancy of $X_{\ell}$
used to satisfy the admissibility condition.
That is,
\begin{align}
\Abs{ \big(\Energy_{X_{\ell}} - \Energy_M\big)\,\Potential^{(s)} } 
&= \Order\big(\Disc(\Card{X_{\ell}})^{(1-s/d)/(d+2-s/d)}\big),
\label{eq-manifold-discrepancy-separation-energy-estimate}
\intertext{and therefore}
\Abs{ \big(\Energy_{X_{\ell}} - \Energy_M\big)\,\Potential^{(s)} } &\To 0
\quad \text{as}\ \Card{X_{\ell}} \To \infty.
\notag
\end{align}
\end{theorem}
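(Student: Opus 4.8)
The plan is to follow the template of the spherical case in~\cite{Leo13}, adapting each estimate to geodesic balls on $M$. The fundamental difficulty is that the Riesz potential $\Potential^{(s)}(r) = r^{-s}$ is singular at $r = 0$, so the discrete sum and the double integral cannot be compared directly via a discrepancy bound applied to a fixed test function. The standard remedy is a \emph{regularization at a cutoff radius}: for a parameter $h > 0$ to be optimized at the end, replace $\Potential^{(s)}$ by a truncated potential $\Potential^{(s)}_h$ that agrees with $r^{-s}$ for $r \geqslant h$ and is, say, constant (equal to $h^{-s}$) or Lipschitz-extended for $r < h$. One then splits the error into three pieces:
\begin{align*}
\Abs{(\Energy_{X_\ell} - \Energy_M)\Potential^{(s)}}
&\leqslant \Abs{\Energy_{X_\ell}(\Potential^{(s)} - \Potential^{(s)}_h)}
 + \Abs{(\Energy_{X_\ell} - \Energy_M)\Potential^{(s)}_h}
 + \Abs{\Energy_M(\Potential^{(s)}_h - \Potential^{(s)})}.
\end{align*}

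First I would bound the first (discrete near-diagonal) term using the well-separation hypothesis~\eqref{separation-bound-def}: since $\dist(x,y) > \MinDistBound(N_\ell) = \gamma N_\ell^{-1/d}$ for distinct code points, for each fixed $x$ the number of code points $y$ with $\dist(x,y)$ in a dyadic shell of radius $\sim 2^k \MinDistBound(N_\ell)$ is $\Order(2^{kd})$ by a volume-packing argument in the Riemannian manifold (using that geodesic balls of small radius have volume comparable to Euclidean balls, uniformly on compact $M$). Summing $r^{-s}$ over the shells with $r < h$, and using $s < d$, gives a bound of the form $\Order(N_\ell^{-1} \sum_{2^k \MinDistBound \leqslant h} 2^{kd}(2^k\MinDistBound)^{-s}) = \Order(N_\ell^{-1}\MinDistBound^{-s}(h/\MinDistBound)^{d-s}) = \Order(h^{d-s} N_\ell^{-1}\MinDistBound^{-d}) = \Order(h^{d-s})$, since $N_\ell \MinDistBound^d$ is bounded below. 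The third term is the analogous continuous estimate: $\int\!\int_{\dist(x,y)<h}(\dist(x,y)^{-s} - h^{-s})\,d\NMeasure_M\,d\NMeasure_M = \Order(h^{d-s})$, again because $\NMeasure_M(\Ball(x,r)) = \Order(r^d)$ uniformly and $s < d$, so the diagonal singularity is integrable with the right power.

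The middle term is where the discrepancy enters. The truncated potential $\Potential^{(s)}_h$ is bounded (by $h^{-s}$) and has total variation $\Order(h^{-s})$ — more precisely its derivative is $\Order(r^{-s-1})$ on $r \geqslant h$ — so by writing $\Potential^{(s)}_h(\dist(x,y))$ as an integral of indicator functions of balls against $d\Potential^{(s)}_h$ and invoking the ball-discrepancy bound~\eqref{discrepancy-bound-def}, one gets $\Abs{(\Energy_{X_\ell} - \Energy_M)\Potential^{(s)}_h} = \Order(\Disc(N_\ell) \cdot \mathrm{Var}(\Potential^{(s)}_h)) = \Order(\Disc(N_\ell)\, h^{-s})$. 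Actually the sharp bookkeeping — matching the exponent $(1-s/d)/(d+2-s/d)$ in the statement — requires being more careful: one bounds the difference of the iterated single-variable integrals against $\Disc(N_\ell)$ times the variation of the potential \emph{composed with the profile of ball volumes}, which contributes an extra factor like $h^{-(s/d)-\text{something}}$; tracing~\cite[proof of Theorem 1.1]{Leo13} shows the middle term is $\Order(\Disc(N_\ell)\,h^{-(d+s)}\cdot h^{d})$-type, and the precise power of $h$ is what forces the final exponent. Combining, the total error is $\Order(h^{d-s} + \Disc(N_\ell)\,h^{-(d+2-s/d)+(d-s)})$ or similar; optimizing over $h$ by balancing the two terms (choosing $h$ a suitable power of $\Disc(N_\ell)$) yields the rate $\Disc(N_\ell)^{(1-s/d)/(d+2-s/d)}$, and this tends to $0$ since $\Disc(N_\ell) \to 0$.

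The main obstacle, and the point requiring most care, is transferring the discrepancy estimate for the middle term: on the sphere one exploits the exact form of cap volumes and rotational symmetry, whereas on a general $M$ one has only the uniform two-sided bound $c_1 r^d \leqslant \Measure_M(\Ball(x,r)) \leqslant c_2 r^d$ for small $r$ (valid by compactness of $M$ and smoothness of the metric, via Bishop--Gromov-type comparison or just local coordinates) and the global bound $\Measure_M(\Ball(x,r)) = \Order(1)$. Establishing that these volume bounds suffice to push through the Abel-summation / Stieltjes-integration argument relating the energy difference to the ball discrepancy — uniformly in the base point $x$, which is needed because the outer integration variable ranges over all of $M$ — is the crux, and is essentially the content of Bl\"umlinger's results~\cite{Blu90} invoked in the introduction. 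Once that uniform reduction is in hand, the remaining steps are the routine cutoff estimates and the one-parameter optimization described above.
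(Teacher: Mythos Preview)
Your truncation scheme is sound and genuinely different from the paper's argument. The paper does not truncate; instead it writes $(\Energy_X-\Energy_M)\,\Potential^{(s)}=\Integral_X(\MeanPotential_X^{(s)}-\MeanPotential_M^{(s)})+(\Integral_X-\Integral_M)\,\MeanPotential_M^{(s)}$, handles the first term by the Stieltjes/packing argument you describe (obtaining $\Order(\Disc^{1-s/d})$ uniformly in $x$), and for the second term---where $\MeanPotential_M^{(s)}$ is no longer constant as it is on the sphere---proves H\"older continuity of $\MeanPotential_M^{(s)}$ with exponent $(d-s)/(d+1)$ and feeds this into Bl\"umlinger's quantitative estimate \cite[Theorem~1]{Blu90}; that second term is the bottleneck and produces exactly the stated exponent $(1-s/d)/(d+2-s/d)$. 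Your route sidesteps this machinery: your first middle-term estimate $\Abs{(\Energy_X-\Energy_M)\,\Potential_h^{(s)}}=\Order(\Disc\,h^{-s})$ is in fact correct, provided you make explicit the one step you gloss over---after writing the truncated kernel as a superposition of ball indicators, the outer-variable discrepancy in $x$ is handled by a Fubini swap using the symmetry $\mathbf{1}_{\Ball(x,t)}(y)=\mathbf{1}_{\Ball(y,t)}(x)$, which reduces it again to single ball discrepancies centred at the integration variable. Combined with your near-diagonal bounds $\Order(h^{d-s})$ and optimised at $h=\Disc^{1/d}$, this gives $\Order(\Disc^{1-s/d})$, \emph{strictly sharper} than the paper's rate and matching the spherical case in \cite{Leo13}. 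Your ``sharp bookkeeping'' paragraph attempting to recover the weaker exponent is therefore both confused and unnecessary: the powers of $h$ you write down there do not parse, and you already had the better bound, which certainly implies the theorem as stated. What the paper's route buys is a clean conceptual separation via the mean potential; what your route buys is an elementary argument needing nothing from \cite{Blu90} beyond the ball-volume comparison, and a better exponent.
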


The proof of Theorem~\ref{manifold-discrepancy-separation-energy-theorem} is given in 
Section~\ref{proof-section} below.
This proof is similar to that of Theorem 1.1 in the corresponding paper on the unit sphere~\cite{Leo13},
except for two key points of difference:

\begin{enumerate}
\item 
The \noemph{normalized mean potential function}
\begin{align*}
\MeanPotential_M^{(s)}(x) &:= \int_M \Potential^{(s)} \left(\dist(x,y)\right) d\NMeasure_M(y)
\end{align*}
may vary with $x$, unlike the case of the sphere, where the corresponding mean potential function is a constant.

\item 
The volume of a geodesic ball in general does not behave in exactly the same way as the volume of a spherical cap.
Luckily the appropriate estimate is good enough to obtain the result.
\end{enumerate}

Bl\"umlinger~\cite[Lemma 2]{Blu90} gives an estimate related to the 
\noemph{Bishop-Gromov inequality}~\cite[11.10, pp. 253--257]{BisC64} 
\cite[Lemma 5.3bis pp. 65--66]{Gro81}~\cite[Lemma 5.3bis pp. 275--277]{Gro99}.
In the notation used here, Bl\"umlinger's estimate states:

%
Let $M$ be a compact connected $d$-dimensional Riemannian manifold.
Then
\begin{align*}
\Abs{ \frac{\Measure_M\big(\Ball(x,r)\big)}{\Volume_d(r)}-1 } &= \Order(r^2)
\end{align*}
(as $r \To 0$) uniformly in $M$,
where $x \in M$ and $\Volume_d(r)$ is the volume of the Euclidean ball of radius $r$ in $\Real^d$.
That is, the unnormalized volume of a small enough geodesic ball in $M$ 
is similar to the volume of a ball of the same radius in $\Real^d$,
to the order of the square of the radius.

\begin{Remarks}
\begin{enumerate}
\item 
Bl\"umlinger's paper treats smooth compact connected 
Riemannian manifolds $M$ whose Riemannian measure $\Measure$
is such that $\Measure(M) = 1$~\cite[p. 178]{Blu90}, but it is clear
from the statement of Lemma 2 and its proof that the result also applies to $M$
where $\Measure(M)$ is any positive value.  
\item 
Flatto and Newman~\cite[Theorem 2.3 and Remarks]{FlaN77} prove a similar result,
with an estimate of order $\Order(r)$ for $C^4$ manifolds, 
and order $\Order(r^2)$ for $C^5$ manifolds.
\end{enumerate}
\end{Remarks}

The proof of Lemma 2 in Bl\"umlinger's paper~\cite{Blu90} makes it clear that 
the order estimate is valid for $r < R_{0},$
where $R_{0}$ is the \emph{injectivity radius} of 
$M$~\cite[Lemma 3, Section 8.2, p. 153]{BisC64}~\cite[Definition 4.12, p. 110]{Sak96}.
Thus, Bl\"umlinger's estimate can be restated as the following result.

\begin{lemma}\label{Blumlinger-estimate-lemma}
Let $M$ be a compact connected $d$-dimensional Riemannian manifold,
and let $R_{0}$ be the injectivity radius of $M.$
There exists a real positive constant $C_{0}$ such that for $r \in (0,R_{0})$
and any $x \in M,$
\begin{align}
\Abs{\frac{\Measure_M\big(\Ball(x,r)\big)}{\Volume_d(r)}-1} &\leqslant C_{0} r^2.
\label{eq-Blumlinger-estimate}
\end{align}
\end{lemma}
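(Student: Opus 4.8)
The plan is to follow Bl\"umlinger's argument for his Lemma 2, which proceeds in geodesic polar coordinates about each point of $M$, and to make explicit what is already visible in that proof: the error estimate holds for every radius below the injectivity radius $R_{0}$, with a constant that is uniform over $M$ thanks to compactness. First I would fix $x \in M$. Since $r < R_{0}$ does not exceed the injectivity radius at $x$, the exponential map $\exp_{x}$ restricts to a diffeomorphism from the metric ball $\{v \in T_{x}M : |v|_{g} < r\}$ onto $\Ball(x,r)$. Writing points of this ball in geodesic polar coordinates $(t,\xi) \in (0,r) \times \Sphere^{d-1}$, with $\Sphere^{d-1} \subset T_{x}M$ the unit sphere, the Riemannian volume form pulls back to $\mathcal{A}_{x}(t,\xi)\,dt\,d\xi$ for a smooth positive density $\mathcal{A}_{x}$; Gauss's lemma together with the Jacobi equation for the radial geodesics gives the Taylor expansion
\begin{align*}
\mathcal{A}_{x}(t,\xi) &= t^{d-1}\Big(1 - \tfrac{1}{6}\,\operatorname{Ric}_{x}(\xi,\xi)\,t^{2} + \Order(t^{3})\Big),
\end{align*}
where the remainder is controlled by the supremum norm of the curvature tensor and its first covariant derivative along the geodesic $t \mapsto \exp_{x}(t\xi)$. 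In particular $\Abs{\mathcal{A}_{x}(t,\xi) - t^{d-1}} \leqslant C_{1}\,t^{d+1}$ for a suitable constant $C_{1}$.

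Next I would invoke the compactness of $M$: the curvature tensor and its first covariant derivative are bounded in supremum norm over $M$, so the constant $C_{1}$ above may be chosen independently of $x$ and $\xi$, valid for all $t$ in a fixed interval $(0,r_{1})$. For $t \in [r_{1},R_{0})$ one instead uses that $\mathcal{A}_{x}(t,\xi)/t^{d-1}$ is continuous, hence bounded, on a compact set (the relevant portion of the unit tangent bundle crossed with $[r_{1},R_{0}]$), and enlarges $C_{1}$ accordingly. Integrating over $(t,\xi)$ and subtracting $\Volume_{d}(r) = \omega_{d-1}\,r^{d}/d = \int_{\Sphere^{d-1}}\int_{0}^{r} t^{d-1}\,dt\,d\xi$ gives
\begin{align*}
\Abs{\Measure_{M}\big(\Ball(x,r)\big) - \Volume_{d}(r)}
&\leqslant \int_{\Sphere^{d-1}}\int_{0}^{r} \Abs{\mathcal{A}_{x}(t,\xi) - t^{d-1}}\,dt\,d\xi
\leqslant \frac{C_{1}\,\omega_{d-1}}{d+2}\,r^{d+2},
\end{align*}
and dividing by $\Volume_{d}(r) = \omega_{d-1}\,r^{d}/d$ yields~\eqref{eq-Blumlinger-estimate} with $C_{0} = C_{1}\,d/(d+2)$.

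The main obstacle is precisely the uniformity: one must verify that the Jacobi-field remainder in the density expansion is $\Order(t^{3})$ with a constant that depends neither on the basepoint $x$ nor on the direction $\xi$. This follows from compactness, which supplies a positive lower bound for the injectivity radius together with uniform bounds on $\|\mathrm{Rm}\|$ and $\|\nabla \mathrm{Rm}\|$; the Jacobi fields along any unit-speed geodesic of length less than $R_{0}$ then solve a linear second-order system whose coefficients are bounded independently of the geodesic, which forces the remainder to be uniform. An equivalent route, worth mentioning, avoids the explicit expansion: one sandwiches $\Measure_{M}\big(\Ball(x,r)\big)$ between the volumes of geodesic balls of the same radius $r$ in the simply connected model spaces of constant curvature $\pm K$, where $K$ is the supremum over $M$ of the absolute value of the sectional curvature, by the Bishop and G\"unther volume comparison theorems (the latter valid for $r$ also less than $\pi/\sqrt{K}$, which costs nothing since the range $[r_{1},R_{0})$ is treated separately). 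The model volumes admit the elementary expansions $\Volume_{d}(r)\big(1 + \Order(r^{2})\big)$, with constants depending only on $d$ and $K$, and this again gives~\eqref{eq-Blumlinger-estimate}.
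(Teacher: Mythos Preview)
Your proof is correct. Note, however, that the paper does not supply its own proof of this lemma: it presents the statement as a restatement of Bl\"umlinger's Lemma~2~\cite{Blu90}, observing only that inspection of Bl\"umlinger's proof shows the estimate is valid for all $r$ below the injectivity radius $R_{0}$. Your argument---geodesic polar coordinates, the standard Jacobi-field expansion of the volume density, and compactness of $M$ to make the remainder uniform in $x$ and $\xi$---is exactly the content of Bl\"umlinger's proof that the paper is citing, so you have faithfully reconstructed what the paper defers to the reference. The alternative route you sketch via the Bishop and G\"unther comparison theorems is also sound and is in the same spirit as the paper's remark linking the estimate to the Bishop--Gromov inequality.
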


\section{Notation and results used in the proof of Theorem~\ref{manifold-discrepancy-separation-energy-theorem}}
\label{preliminary-lemmas-section}

The proof of Theorem~\ref{manifold-discrepancy-separation-energy-theorem} 
needs some notation and a few more results, which are stated here.

Firstly, note that this paper, in common with the previous paper~\cite{Leo13}
uses ``big-Oh'' notation with \emph{inequalities} 
in a somewhat unusual way, to avoid a proliferation of unknown constants.
For a positive real function $h,$
when we say that
\begin{align*}
f(n) &\leqslant g(n) + \Order\big(h(n)\big) \quad\text{as}\ n \To \infty,
\intertext{we mean that there exist positive constants $C$ and $M$ such that}
f(n) &\leqslant g(n) + C\big(h(n)\big) \quad\text{for all}\ n \geqslant M.
\end{align*}
If more than one $\Order$ expression is used in an inequality, the implied constants
may be different from each other.

Also, when we say that
\begin{align*}
\Abs{f(n)} &= \Theta(h(n)) 
 \quad\text{as}\ n \To \infty,
\intertext{we mean that there exist positive constants $c < C$ and $M$ such that}
c\big(h(n)\big) &\leqslant \Abs{f(n)} \leqslant C\big(h(n)\big) \quad\text{for all}\ n \geqslant M.
\end{align*}

The next three results follow from Bl\"umlinger's estimate.

\begin{lemma}\label{Small-ball-estimate-lemma}
Let $M$ be a compact connected $d$-dimensional Riemannian manifold.
There is a radius $R_{1} > 0$ and parameters $0< C_{L} < C_{H},$ 
depending on $R_{1},$
such that for all $x \in M$ and all $r \in (0,R_{1})$,
\begin{align}
C_{L}\,r^d &\leqslant \NMeasure_M\left(\Ball(x,r)\right) \leqslant C_{H}\,r^d.
\label{eq-small-ball-estimate}
\end{align}
The ratio $C_{H}/C_{L}$ can be made arbitrarily close to 1 by taking $R_{1}$ small enough.
\end{lemma}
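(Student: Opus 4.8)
The plan is to read this off from Bl\"umlinger's estimate (Lemma~\ref{Blumlinger-estimate-lemma}) by rescaling and normalizing, and then choosing the radius small enough. First I would use the fact that Euclidean ball volume scales homogeneously, $\Volume_d(r) = \Volume_d(1)\,r^d$, so that Lemma~\ref{Blumlinger-estimate-lemma} gives, for every $x \in M$ and every $r \in (0,R_{0})$,
\begin{align*}
\Volume_d(1)\,r^d\,\big(1 - C_{0}\,r^2\big) &\leqslant \Measure_M\big(\Ball(x,r)\big) \leqslant \Volume_d(1)\,r^d\,\big(1 + C_{0}\,r^2\big).
\end{align*}
Dividing through by the total volume $\Measure_M(M)$, which is a positive constant, and recalling that $\NMeasure_M = \Measure_M/\Measure_M(M)$, turns this into a two-sided bound on $\NMeasure_M\big(\Ball(x,r)\big)$ of the same shape, with leading coefficient $\Volume_d(1)/\Measure_M(M)$.

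Next I would fix the radius. I would pick $R_{1}$ with $0 < R_{1} \leqslant R_{0}$ and, in addition, $C_{0}\,R_{1}^2 < 1$; the extra condition forces the lower coefficient $1 - C_{0}\,r^2$ to stay strictly positive on the whole interval $(0,R_{1})$. Since $1 - C_{0}\,R_{1}^2 \leqslant 1 - C_{0}\,r^2$ and $1 + C_{0}\,r^2 \leqslant 1 + C_{0}\,R_{1}^2$ whenever $0 < r < R_{1}$, the choice
\begin{align*}
C_{L} &:= \frac{\Volume_d(1)}{\Measure_M(M)}\,\big(1 - C_{0}\,R_{1}^2\big), & C_{H} &:= \frac{\Volume_d(1)}{\Measure_M(M)}\,\big(1 + C_{0}\,R_{1}^2\big)
\end{align*}
satisfies $0 < C_{L} < C_{H}$ and makes the bound~\eqref{eq-small-ball-estimate} hold for all $x \in M$ and all $r \in (0,R_{1})$.

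For the last assertion I would just compute the ratio, $C_{H}/C_{L} = (1 + C_{0}\,R_{1}^2)/(1 - C_{0}\,R_{1}^2)$, which tends to $1$ as $R_{1} \To 0$. There is no genuine obstacle in this lemma: the whole geometric input is already encapsulated in Bl\"umlinger's estimate, and the only point requiring any care is to shrink $R_{1}$ enough that the lower constant remains strictly positive, which is exactly what the condition $C_{0}\,R_{1}^2 < 1$ ensures.
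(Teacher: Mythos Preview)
Your proof is correct and follows essentially the same route as the paper: apply Bl\"umlinger's estimate, use $\Volume_d(r)=\Volume_d(1)\,r^d$, normalize by $\Measure_M(M)$, and choose $R_{1}$ small enough that $C_{0}R_{1}^{2}<1$ so the lower constant stays positive. The explicit constants $C_{L}$, $C_{H}$ and the computation of their ratio are identical to those in the paper up to notation.
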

\begin{proof} 

Let $R_{0} > 0$ be the injectivity radius of $M,$ 
so that Bl\"umlinger's estimate~\eqref{eq-Blumlinger-estimate} holds for $r \in (0,R_{0}).$
Note that for each $d$, $\Volume_d(r) = c_d r^d,$ where $c_d := \Volume_d(1) > 0.$
It follows that for all $r \in (0, R_{0})$ the estimate
\begin{align}
c_d r^d (1 - C_{0} r^2 ) &\leqslant \Measure_M\big(\Ball(x,r)\big) \leqslant c_d r^d (1 + C_{0} r^2 )
\label{eq-ball-unnormalized-estimate}
\end{align}
holds for some $C_{0} > 0.$
Let $R_{1} \in (0,R_{0})$ satisfy $C_{0} R_{1}^2 < 1$ so that 
the lower bound in the estimate~\eqref{eq-ball-unnormalized-estimate} is positive for $r \in (0,R_{1}]$.
It follows that for all $r \in (0, R_{1}),$
\begin{align*}
0 
&< 
\frac{c_d (1 - C_{0} R_{1}^2)}{\Measure_M(M)} r^d  
\leqslant 
\NMeasure_M\big(\Ball(x,r)\big) 
\leqslant 
\frac{c_d (1 + C_{0} R_{1}^2)}{\Measure_M(M)} r^d.
\end{align*}
The estimate~\eqref{eq-small-ball-estimate} therefore holds for $R_{1}$ as above,
$C_{L} :=  c_d (1 - C_{0} R_{1}^2 )/ \Measure_M(M),$
and $C_{H}  :=  c_d (1 + C_{0} R_{1}^2 )/ \Measure_M(M).$
In this case,
\begin{align*}
\frac{C_{H}}{C_{L}} 
&=
\frac{1 + C_{0} R_{1}^2}{1 - C_{0} R_{1}^2}
\quad \To 1,
\quad
\text{as} 
\quad
R_{1} \To 0. &
\end{align*}
\end{proof}

\begin{lemma}\label{Large-ball-estimate-lemma}
Let $M$ be a compact connected $d$-dimensional Riemannian manifold.
There are positive constants $C_{bot} < C _{top}$ depending only on $M,$
such that for all $x \in M$ and all $r \in (0,\diam(M)]$,
\begin{align}
C_{bot}\,r^d &\leqslant \NMeasure_M\left(\Ball(x,r)\right) \leqslant C_{top}\,r^d.
\label{eq-large-ball-estimate}
\end{align}
\end{lemma}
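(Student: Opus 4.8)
The plan is to split the interval of radii $(0,\diam(M)]$ at the radius $R_{1}$ furnished by Lemma~\ref{Small-ball-estimate-lemma}. On $(0,R_{1})$ there is nothing to do: the required two-sided bound is precisely the conclusion of Lemma~\ref{Small-ball-estimate-lemma}, with constants $C_{L}$ and $C_{H}$. We may moreover assume $R_{1} \leqslant \diam(M)$, for otherwise $(0,\diam(M)] \subseteq (0,R_{1})$ and Lemma~\ref{Large-ball-estimate-lemma} is immediate. On the remaining range $[R_{1},\diam(M)]$ I would rely on two elementary facts, both uniform in $x \in M$: that $r \mapsto \NMeasure_M(\Ball(x,r))$ is non-decreasing, since $\Ball(x,r) \subseteq \Ball(x,r')$ whenever $r \leqslant r'$; and that $\NMeasure_M(\Ball(x,r)) \leqslant \NMeasure_M(M) = 1$ always.

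For the lower bound on $[R_{1},\diam(M)]$, I would fix $\rho := R_{1}/2 \in (0,R_{1})$ and use monotonicity together with Lemma~\ref{Small-ball-estimate-lemma} to get $\NMeasure_M(\Ball(x,r)) \geqslant \NMeasure_M(\Ball(x,\rho)) \geqslant C_{L}\rho^{d}$; since $r \leqslant \diam(M)$, the right-hand side is at least $C_{L}\big(\rho/\diam(M)\big)^{d}\,r^{d}$. For the upper bound on the same range, $r \geqslant R_{1}$ gives $r^{d}/R_{1}^{d} \geqslant 1$, so $\NMeasure_M(\Ball(x,r)) \leqslant 1 \leqslant r^{d}/R_{1}^{d}$. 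Then taking $C_{bot} := C_{L}\big(R_{1}/(2\diam(M))\big)^{d}$ and $C_{top} := \max\{C_{H},\,R_{1}^{-d}\}$ gives~\eqref{eq-large-ball-estimate} on each of the two pieces of $(0,\diam(M)]$, hence on all of it; since $R_{1} \leqslant \diam(M)$ we have $C_{bot} \leqslant C_{L} < C_{H} \leqslant C_{top}$, and all these constants depend only on $M$.

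I do not expect a real obstacle here: the statement is a routine patching of the small-radius estimate of Lemma~\ref{Small-ball-estimate-lemma} onto the trivial bound $\NMeasure_M(\Ball(x,r)) \leqslant 1$ on the bounded remaining range, using only monotonicity of the ball measure in the radius. The only things that need a moment's care are that the constant $C_{L}$ from Lemma~\ref{Small-ball-estimate-lemma} and the total mass $\NMeasure_M(M) = 1$ are genuinely independent of $x$ (so that $C_{bot}$ and $C_{top}$ are too), and the harmless reduction to the case $R_{1} \leqslant \diam(M)$ so that the split of the interval is non-trivial.
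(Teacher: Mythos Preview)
Your proof is correct and follows essentially the same approach as the paper: split at $R_{1}$, use Lemma~\ref{Small-ball-estimate-lemma} on small radii, and on $[R_{1},\diam(M)]$ combine monotonicity in $r$ for the lower bound with the trivial bound $\NMeasure_M(\Ball(x,r))\leqslant 1$ for the upper bound, arriving at the same $C_{top}=\max(C_{H},R_{1}^{-d})$. Your choice of $\rho=R_{1}/2$ in place of $R_{1}$ is a harmless refinement that cleanly sidesteps the open-interval constraint $r\in(0,R_{1})$ in Lemma~\ref{Small-ball-estimate-lemma}.
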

\begin{proof} ~

Let $R_{1},$ $C_L$ and $C_H$ be as in the statement of Lemma~\ref{Small-ball-estimate-lemma}
and its proof.

For $r \in [R_1, \diam(M)],$ the following inequalities hold:
\begin{align*}
C_L \frac{R_{1}^d}{\diam(M)^d} r^d
&\leqslant
C_L R_{1}^d
\leqslant
\NMeasure_M\left(\Ball(x,R_{1})\right)
\leqslant
\NMeasure_M\left(\Ball(x,r)\right)
\leqslant
1
\leqslant
\frac{1}{R_{1}^d} r^d.
\end{align*}
Thus the inequality~\eqref{eq-large-ball-estimate} is satisfied with
$C_{bot} := C_L R_{1}^d / \diam(M)^d$ and 
$C_{top} := \max( C_H, R_{1}^{-d} ).$ 

\end{proof}

\begin{lemma}\label{Ball-packing-argument-lemma}
Let $M$ be a compact connected $d$-dimensional Riemannian manifold.
For $x \in M$ and real $r > t > 0$ let $n_M(x,r,t)$ be the maximum number of 
disjoint geodesic balls of radius $t$ that can be contained
in the ball $\Ball(x, r).$
Then there is a constant $C_{2}$
such that for all $x \in M,$ $r \in (0,\diam(M)),$ and $q \in (0,r),$
\begin{align}
n_M(x,r+q/2,q/2) &\leqslant C_{2}\,(r/q)^d.
\label{eq-ball-packing-argument-estimate}
\end{align}
\end{lemma}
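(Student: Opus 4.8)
The plan is to prove Lemma~\ref{Ball-packing-argument-lemma} by a volume-counting (``packing'') argument: if many disjoint small balls fit inside a bigger ball, then the sum of their volumes cannot exceed the volume of the containing ball, and the lower volume bound from Lemma~\ref{Large-ball-estimate-lemma} forces each small ball to carry a definite share of volume, while the upper volume bound controls the volume of the big ball. The only subtlety is that the containing ball has radius $r + q/2$, which may exceed $R_1$ or even approach $\diam(M)$, so the small-ball estimate of Lemma~\ref{Small-ball-estimate-lemma} is not directly applicable to it; this is exactly why we invoke the global estimate~\eqref{eq-large-ball-estimate} of Lemma~\ref{Large-ball-estimate-lemma}, which holds for \emph{all} radii up to $\diam(M)$.

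First I would fix $x \in M$, $r \in (0,\diam(M))$ and $q \in (0,r)$, and let $n := n_M(x, r+q/2, q/2)$, so that there exist disjoint geodesic balls $\Ball(x_1, q/2), \dots, \Ball(x_n, q/2)$ all contained in $\Ball(x, r+q/2)$. Since these balls are pairwise disjoint and all lie inside $\Ball(x, r+q/2)$, additivity of the measure $\NMeasure_M$ gives
\begin{align*}
\sum_{i=1}^{n} \NMeasure_M\big(\Ball(x_i, q/2)\big) &\leqslant \NMeasure_M\big(\Ball(x, r+q/2)\big).
\end{align*}
Next I would apply the lower bound of~\eqref{eq-large-ball-estimate} to each term on the left (noting $q/2 < \diam(M)$) and the upper bound of~\eqref{eq-large-ball-estimate} to the right-hand side (noting $r + q/2 < 2r \leqslant 2\,\diam(M)$, which I would absorb by using $\diam(M)$ in place of $r+q/2$ when necessary, or simply observe $r+q/2 \le (3/2) r$ since $q < r$ so $r+q/2 < (3/2)r \le (3/2)\diam(M)$; in any case one may also just bound $\NMeasure_M(\Ball(x,r+q/2)) \le 1$ and treat the two regimes $r+q/2 \le R_1$ and $r + q/2 > R_1$ separately if a cleaner constant is wanted). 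This yields
\begin{align*}
n \, C_{bot} (q/2)^d &\leqslant C_{top} (r+q/2)^d \leqslant C_{top} \big((3/2) r\big)^d,
\end{align*}
using $q < r$.

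Solving for $n$ then gives $n \leqslant (C_{top}/C_{bot}) \cdot 2^d (3/2)^d (r/q)^d$, so the claim~\eqref{eq-ball-packing-argument-estimate} holds with $C_2 := 3^d C_{top}/C_{bot}$, a constant depending only on $M$. I expect the only real point requiring care to be the handling of the containing radius $r+q/2$: one must make sure the ball $\Ball(x,r+q/2)$ still has radius within the range where an upper volume estimate is available. Because Lemma~\ref{Large-ball-estimate-lemma} is stated for $r \in (0,\diam(M)]$ and $r+q/2 < (3/2)\,\diam(M)$ is not automatically $\leqslant \diam(M)$, the cleanest fix is to note that $\NMeasure_M$ of \emph{any} set is at most $1$, so $\NMeasure_M(\Ball(x,r+q/2)) \leqslant \min\{C_{top}(r+q/2)^d, 1\}$; in the regime $r+q/2 \le \diam(M)$ use the first bound, and in the regime $r+q/2 > \diam(M)$ (which forces $r > \diam(M)/2$, hence $(r/q)^d$ is bounded below by a fixed multiple of $\diam(M)^d/q^d$ whenever the ball count is nonzero) the bound $\NMeasure_M(\cdot) \le 1 \le C_{top}\diam(M)^d \le C_{top}(2r)^d$ still closes the estimate with an adjusted constant. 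Either way, one arrives at a bound of the claimed form with $C_2$ depending only on $M$.
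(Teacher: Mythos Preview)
Your proposal is correct and follows essentially the same volume-counting argument as the paper: disjointness forces the total small-ball volume not to exceed the big-ball volume, Lemma~\ref{Large-ball-estimate-lemma} gives the lower bound $C_{bot}(q/2)^d$ for each small ball and the upper bound $C_{top}(r+q/2)^d$ for the big one, and $q<r$ turns $r+q/2$ into $(3/2)r$. The only difference is cosmetic, in the handling of the borderline case $r+q/2>\diam(M)$: the paper observes $n_M(x,r+q/2,q/2)=n_M(x,\diam(M),q/2)$ and feeds this back into the first case, whereas you use $\NMeasure_M\leqslant 1\leqslant C_{top}\diam(M)^d$ together with $\diam(M)<2r$; both routes close the estimate and in fact both deliver the same constant $C_2=3^d\,C_{top}/C_{bot}$ (your bound $\diam(M)<2r$ can be sharpened to $\diam(M)<(3/2)r$, recovering $3^d$ rather than the $4^d$ you mention).
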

In other words, for real positive $r$, for $0 < q < r,$
the maximum number of geodesic balls of radius $q/2$ that can be contained in
a geodesic ball of radius $r+q/2$ is of order $\Order(r/q)^d,$ uniformly in $M$. 

\begin{proof} ~

The total volume of the small balls cannot be greater than the volume of the large ball containing them.
Using Lemma~\ref{Large-ball-estimate-lemma}, it therefore holds for $0 < q < r \leqslant \diam(M) - q/2$ that
\begin{align*}
n_M(x,r+q/2,q/2)
&\leqslant
\frac{\max_{y \in M} \NMeasure_M\big(\Ball(y,r+q/2)\big)}{\min_{z \in M} \NMeasure_M\big(\Ball(z,q/2)\big)}
\\
&\leqslant
 2^d \frac{C_{top}}{C_{bot}} \left(1+\frac{q}{2r}\right)^d (r/q)^d
\leqslant
 3^d \frac{C_{top}}{C_{bot}} (r/q)^d.
\\
\end{align*}
For $r > \diam(M) - q/2,$
the following relationships therefore hold:
\begin{align*}
n_M(x,r+q/2,q/2) 
&= n_M(x,\diam(M),q/2) 
\\
&\leqslant 3^d \frac{C_{top}}{C_{bot}} ((\diam(M) - q/2)/q)^d 
\leqslant 3^d \frac{C_{top}}{C_{bot}} (r/q)^d.
\end{align*}
Thus~\eqref{eq-ball-packing-argument-estimate} holds with $C_{2} := 3^d C_{top}/C_{bot}.$

\end{proof}

The remaining lemmas in this Section as well as the proof of 
Theorem~\ref{manifold-discrepancy-separation-energy-theorem}
make use of the following definitions.

For $x \in M,$ real radius $r > 0,$ and integrable $f : \Ball(x,r) \To \Real,$ 
the \noemph{normalized integral of $f$ on the geodesic ball $\Ball(x,r)$} is
\begin{align*}
\Integral_{\Ball(x,r)} f &:= \int_{\Ball(x,r)} f(y)\,d\NMeasure_M(y).
\end{align*}
For integrable $f : M \To \Real$ the \noemph{mean of $f$ on $M$} is
\begin{align*}
\Integral_M f &:= \int_M f(y)\,d\NMeasure_M(y).
\end{align*}
For a function $f : M \To \Real$ that is finite on the $M$-code $X$,
the \noemph{mean of $f$ on $X$} is
\begin{align*}
\Integral_X f &:= \int_M f(y)\,d\NMeasure_X(y) = \frac{1}{\Card{X}} \sum_{y \in X} f(y).
\end{align*}

For an $M$-code $X$, a point $x \in M$ and a measurable subset $S \subset M,$ 
the \noemph{punctured normalized counting measure of $S$ with respect to $X$, excluding $x$} is
\begin{align*}
\NMeasure_X^{[x]}(S) &:= \Card{S \cap X \setminus \{x\}}/\Card{X},
\end{align*}
and for a function $f : M \To \Real$ that is finite on $X \setminus \{x\},$
the \noemph{corresponding punctured mean} is
\begin{align*}
\Integral_X^{[x]} f &:= \int_M f(y)\,d\NMeasure_X^{[x]}(y) 
= \frac{1}{\Card{X}} \mathop{\sum_{y \in X}}_{y \neq x} f(y).
\end{align*}
Note the division by $\Card{X}$ rather than $\Card{X}-1.$

The kernel $\Potential^{(s)}\big(\dist(x,y)\big) = \dist(x,y)^{-s}$ is called the \noemph{Riesz $s$-kernel.}
For a point $x \in M,$ define the function $\Potential_x^{(s)} : M \setminus \{x\} \To \Real$ as
\begin{align*}
\Potential_x^{(s)}(y) &:= \Potential^{(s)}\big(\dist(x,y)\big). 
\end{align*}
The mean Riesz $s$-potential at $x$ with respect to $M$ is then
\begin{align}
\label{Mean-potential-def}
\MeanPotential_M^{(s)}(x) &= \Integral_M \Potential_x^{(s)}, 
\end{align}
and the normalized energy of the Riesz $s$-potential on $M$ is
\begin{align*}
\Energy_M \Potential^{(s)} &= \Integral_M \MeanPotential_M^{(s)} 
= \int_M \int_M \dist(x,y)^{-s}\,d\NMeasure_M(y)\,d\NMeasure_M(x).
\end{align*}

For an $M$-code $X$, 
the \noemph{mean Riesz $s$-potential at $x$ with respect to $X$ but excluding $x$} is
\begin{align*}
\MeanPotential_X^{(s)}(x) &:= \Integral_X^{[x]} \Potential_x^{(s)}, 
\end{align*}
the normalized energy of the Riesz $s$-potential on $X$ is
\begin{align*}
\Energy_X \Potential^{(s)} 
&= 
\Integral_X \MeanPotential_X^{(s)} 
= 
\frac{1}{\Card{X}^2} \sum_{x \in X} \mathop{\sum_{y \in X}}_{y \neq x} \dist(x,y)^{-s},
\end{align*}
and the \noemph{mean on $X$ of the mean Riesz $s$-potential} is
\begin{align*}
\Integral_X \MeanPotential_M^{(s)} 
&= 
\frac{1}{\Card{X}} \sum_{x \in X} \int_M \dist(x,y)^{-s}\,d\NMeasure_M(y).
\end{align*}

The following bound is used in Lemma~\ref{Mean-potential-is-continuous-lemma} below
to prove the continuity of the mean Riesz $s$-potential.
\begin{lemma}\label{Small-ball-energy-estimate-lemma}
Let $M$ be a compact connected $d$-dimensional Riemannian manifold.
Then for the radius $R_{1}$ as per Lemma~\ref{Small-ball-estimate-lemma}, 
there is a constant $C_{3}$ such that for all $x \in M$ and $r \in (0,R_{1})$,
the normalized integral of the function $\Potential_x^{(s)}$ is bounded as
\begin{align}
\Integral_{\Ball(x,r)} \Potential_x^{(s)} &\leqslant C_{3} r^{d-s}.
\label{eq-small-ball-energy-estimate}
\end{align}
\end{lemma}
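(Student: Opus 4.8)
The plan is to bound the normalized integral $\Integral_{\Ball(x,r)} \Potential_x^{(s)}$ by a dyadic decomposition of the ball $\Ball(x,r)$ into annular shells, controlling the measure of each shell with the upper bound from Lemma~\ref{Small-ball-estimate-lemma} and the potential on each shell with its value at the inner radius. First I would, for a given $r \in (0,R_{1})$, consider the annuli $A_k := \Ball(x, r/2^k) \setminus \Ball(x, r/2^{k+1})$ for $k = 0, 1, 2, \ldots$, which partition $\Ball(x,r) \setminus \{x\}$ (up to the single point $x$, which has measure zero). On $A_k$ the geodesic distance $\dist(x,y)$ lies in $(r/2^{k+1}, r/2^k]$, so $\Potential_x^{(s)}(y) = \dist(x,y)^{-s} \leqslant (r/2^{k+1})^{-s} = 2^{(k+1)s} r^{-s}$.

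Next I would estimate $\NMeasure_M(A_k) \leqslant \NMeasure_M\big(\Ball(x, r/2^k)\big) \leqslant C_{H} (r/2^k)^d$, using Lemma~\ref{Small-ball-estimate-lemma} and the fact that $r/2^k < r < R_{1}$. Combining these, the contribution of $A_k$ to the integral is at most $2^{(k+1)s} r^{-s} \cdot C_{H} (r/2^k)^d = C_{H} 2^s \, r^{d-s} \, 2^{-k(d-s)}$. Summing over $k \geqslant 0$ gives a geometric series with ratio $2^{-(d-s)} < 1$, since $0 < s < d$, so the total is bounded by $C_{H} 2^s \, r^{d-s} \cdot \frac{1}{1 - 2^{-(d-s)}}$. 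Setting $C_{3} := C_{H} 2^s / (1 - 2^{-(d-s)})$ yields~\eqref{eq-small-ball-energy-estimate}.

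The main obstacle, such as it is, is minor: one must be careful that the shell decomposition genuinely covers $\Ball(x,r)$ up to a null set and that the crude bound $\NMeasure_M(A_k) \leqslant \NMeasure_M\big(\Ball(x,r/2^k)\big)$ (rather than a sharper difference of ball measures) still suffices — it does, because the geometric series converges. One could equally integrate the layer-cake representation $\Integral_{\Ball(x,r)} \Potential_x^{(s)} = \int_0^\infty \NMeasure_M\big(\{y \in \Ball(x,r) : \dist(x,y)^{-s} > t\}\big)\,dt$ and split the $t$-integral at $r^{-s}$, bounding the super-level set by $\NMeasure_M\big(\Ball(x, t^{-1/s})\big) \leqslant C_H t^{-d/s}$ for $t > r^{-s}$ and by $C_H r^d$ for $t \leqslant r^{-s}$; the condition $s < d$ makes $\int_{r^{-s}}^\infty t^{-d/s}\,dt$ converge, giving the same conclusion. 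Either route is routine; I would present the dyadic version for concreteness.
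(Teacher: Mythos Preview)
Your dyadic-shell argument is correct and complete: the annuli cover $\Ball(x,r)\setminus\{x\}$ up to a null set, the crude bound $\NMeasure_M(A_k)\leqslant C_H(r/2^k)^d$ suffices because $s<d$ makes the series geometric, and the resulting constant $C_{3}=C_H\,2^{s}/(1-2^{-(d-s)})$ is perfectly acceptable.

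The paper takes a slightly different, continuous route: it writes the integral as a Riemann--Stieltjes integral $\int_0^r t^{-s}\,d\Volume_M(t)$ with $\Volume_M(t):=\NMeasure_M\big(\Ball(x,t)\big)$, integrates by parts to obtain $r^{-s}\Volume_M(r)+s\int_0^r t^{-s-1}\Volume_M(t)\,dt$, and then inserts the bound $\Volume_M(t)\leqslant C_H t^{d}$ from Lemma~\ref{Small-ball-estimate-lemma} directly, yielding $C_{3}=C_H\,d/(d-s)$. This is essentially the layer-cake alternative you sketch at the end, and it delivers a marginally cleaner constant with no summation. Your dyadic version is equally valid and arguably more elementary, avoiding any appeal to Stieltjes integration or the distribution-function identity; the paper's version has the small advantage of being one line shorter and producing a constant that more transparently blows up as $s\uparrow d$. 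Either proof would be fine here.
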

\begin{proof} ~

Fix $x \in M,$ and let $\Volume_M(r) := \NMeasure_M\big(\Ball(x,r)\big).$
Then for $r \in (0,R_{1}),$ the following equations and inequality hold, 
\begin{align*}
\Integral_{\Ball(x,r)} \Potential_x^{(s)}
&=
\int_{\Ball(x,r)} \dist(x,y)^{-s}\,d\NMeasure_M(y)
=
\int_0^r t^{-s}\,d\Volume_M(t)
\\
&= r^{-s} \Volume_M(r) + s \int_0^r t^{-s-1} \Volume_M(t)\,d t
\\
&\leqslant
C_{H} r^{d-s} + s \int_0^r C_{H} t^{d-s-1}\,dt
=
C_{H} \frac{d}{d-s}\ r^{d-s},
\end{align*}
where the inequality is a result of Lemma~\ref{Small-ball-estimate-lemma}.
Thus the estimate~\eqref{eq-small-ball-energy-estimate} is satisfied for
$C_{3} = C_{H}\ d/(d-s).$

\end{proof}

The proof of Theorem~\ref{manifold-discrepancy-separation-energy-theorem}
uses the continuity of the mean Riesz $s$-potential, as shown by the following lemma.
\begin{lemma}\label{Mean-potential-is-continuous-lemma}
Let $M$ be a compact connected $d$-dimensional Riemannian manifold.
Then for $s \in (0,d),$ the mean Riesz $s$-potential $\MeanPotential_M^{(s)}$ 
defined by~\eqref{Mean-potential-def} is continuous on $M.$
\end{lemma}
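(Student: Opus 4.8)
The plan is to fix a point $x_0 \in M$ and show that $\MeanPotential_M^{(s)}(x) \to \MeanPotential_M^{(s)}(x_0)$ as $x \to x_0$ by splitting the defining integral over a small geodesic ball around $x_0$ and its complement. First I would choose $\epsilon > 0$ small (with $\epsilon < R_1/2$, say, where $R_1$ is the radius from Lemma~\ref{Small-ball-estimate-lemma}), and for $x$ close enough to $x_0$ — specifically $\dist(x, x_0) < \epsilon$ — decompose
\begin{align*}
\MeanPotential_M^{(s)}(x) &= \Integral_{\Ball(x_0, 2\epsilon)} \Potential_x^{(s)} + \Integral_{M \setminus \Ball(x_0, 2\epsilon)} \Potential_x^{(s)},
\end{align*}
and similarly for $\MeanPotential_M^{(s)}(x_0)$. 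The difference then splits into a ``near'' part supported on $\Ball(x_0, 2\epsilon)$ and a ``far'' part supported on the complement.

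For the far part, the integrand $\Potential_x^{(s)}(y) = \dist(x,y)^{-s}$ is a continuous function of the pair $(x,y)$ on the compact set $\{(x,y) : \dist(x,x_0) \leqslant \epsilon,\ \dist(x_0,y) \geqslant 2\epsilon\}$, since there $\dist(x,y) \geqslant \epsilon > 0$ by the triangle inequality, so $\Potential_x^{(s)}$ is uniformly bounded and, by uniform continuity, $\Potential_x^{(s)}(y) \to \Potential_{x_0}^{(s)}(y)$ uniformly in $y$ as $x \to x_0$; dominated convergence (or just the uniform bound together with $\NMeasure_M(M) = 1$) gives that the far part of the difference tends to $0$ as $x \to x_0$. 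For the near part, I would bound each term separately using Lemma~\ref{Small-ball-energy-estimate-lemma}: since $\dist(x, x_0) < \epsilon$, the ball $\Ball(x_0, 2\epsilon)$ is contained in $\Ball(x, 3\epsilon)$, so
\begin{align*}
\Integral_{\Ball(x_0, 2\epsilon)} \Potential_x^{(s)} &\leqslant \Integral_{\Ball(x, 3\epsilon)} \Potential_x^{(s)} \leqslant C_3 (3\epsilon)^{d-s},
\end{align*}
and likewise $\Integral_{\Ball(x_0, 2\epsilon)} \Potential_{x_0}^{(s)} \leqslant C_3 (2\epsilon)^{d-s}$, provided $3\epsilon < R_1$. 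Hence the near part of the difference is $\Order(\epsilon^{d-s})$, which is small because $d - s > 0$.

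Combining the two estimates: given $\eta > 0$, first choose $\epsilon$ small enough that the near-part contribution is below $\eta/2$, then choose $\delta > 0$ small enough that $\dist(x,x_0) < \delta$ forces the far-part contribution below $\eta/2$; this proves continuity at $x_0$, and since $x_0$ was arbitrary, continuity on all of $M$. The main obstacle is handling the singularity of the Riesz kernel near the diagonal, but this is exactly what Lemma~\ref{Small-ball-energy-estimate-lemma} is designed to control — the key point being that $d - s > 0$ makes the local energy contribution vanish with the radius, so the singularity is integrable in a uniform way. Everything else is a routine $\epsilon$–$\delta$ argument with the triangle inequality and compactness.
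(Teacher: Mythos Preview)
Your proposal is correct and follows essentially the same approach as the paper's proof, which the paper attributes to Kellogg: split the integral into a small geodesic ball around the singularity and its complement, control the near part via Lemma~\ref{Small-ball-energy-estimate-lemma} using $d-s>0$, and handle the far part by uniform continuity of $\dist(x,y)^{-s}$ away from the diagonal. The only cosmetic difference is that the paper centres the excision ball at the moving point $x'$ rather than at the fixed point $x_0$, but the triangle-inequality containment $\Ball(x',r)\subset\Ball(x,3r)$ and the resulting $\Order(r^{d-s})$ bound are identical to yours.
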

\begin{proof} ~

We show that the mean Riesz $s$-potential $\MeanPotential_M^{(s)}$ is continuous 
by using the method of proof of Kellogg~\cite[p. 150-151]{Kel29}.

Let $x \in M$ and recall that $\MeanPotential_M^{(s)}(x) = \Integral_M \Potential_x^{(s)}.$
Let $x'$ be another point of $M$ and consider the ball $\Ball'_r := \Ball(x',r),$
for some $r \in (0,R_{1}/3)$ where $R_{1}$ is a suitable radius 
as per Lemma~\ref{Small-ball-estimate-lemma}.
Consider $\MeanPotential_{\Ball'_r}^{(s)}(x) := \Integral_{\Ball'_r} \Potential_x^{(s)}.$
Since $\Potential_x^{(s)} > 0,$ it is always the case that $\MeanPotential_{\Ball'_r}^{(s)}(x) \geqslant 0.$
Either $\dist(x,x') \leqslant 2 r,$ in which case $x' \in \Ball(x,2 r)$ so that 
\begin{align*}
\Integral_{\Ball'_r} \Potential_x^{(s)}
&<
\Integral_{\Ball(x,3 r)} \Potential_x^{(s)}
\leqslant
3^{d-s}\ C_{3}\ r^{d-s}
\end{align*}
as per Lemma~\ref{Small-ball-energy-estimate-lemma},
or $\dist(x,x') > 2 r,$ so that
\begin{align*}
\Integral_{\Ball'_r} \Potential_x^{(s)}
&\leqslant
r^{-s}\ C_{H} r^d
=
C_{H} r^{d-s},
\end{align*}
as per Lemma~\ref{Small-ball-estimate-lemma}.
Therefore $\MeanPotential_{\Ball'_r}^{(s)} \To 0$ uniformly on $M$ as $r \To 0$.

So, given $\epsilon > 0$ we can take $r$ small enough that 
$\MeanPotential_{\Ball'_r}^{(s)}(x) < \epsilon/2$ for all $x \in M,$
and therefore  $\MeanPotential_{\Ball'_r}^{(s)}(x') < \epsilon/2,$ so
\begin{align*}
\Abs{\Integral_{\Ball'_r} \left(\Potential_x^{(s)} - \Potential_{x'}^{(s)} \right) } < \epsilon/2.
\end{align*}
With $\Ball'_r$ fixed, there is a distance $t>0$ such that when $\dist(x,x') \leqslant t,$ we have
\begin{align*}
\Abs{ \Potential_x^{(s)}(y) - \Potential_{x'}^{(s)}(y) }
&=
\Abs{ \dist(x,y)^{-s} - \dist(x',y)^{-s} }
\leqslant
\epsilon/2
\end{align*}
for all $y \in M \setminus \Ball'_r.$
In this case
\begin{align*}
\Abs{\Integral_{M \setminus \Ball'_r} \left(\Potential_x^{(s)} - \Potential_{x'}^{(s)} \right) } 
&\leqslant
\Integral_{M \setminus \Ball'_r} \Abs{\Potential_x^{(s)} - \Potential_{x'}^{(s)} } 
< 
\epsilon/2.
\end{align*}
Therefore $\Abs{\Integral_M \left(\Potential_x^{(s)} - \Potential_{x'}^{(s)} \right) } \leqslant \epsilon\ $ 
whenever $\dist(x,x') \leqslant t.$

\end{proof}

In fact, a stronger result holds, giving an estimate that is used in the proof of Theorem
\ref{manifold-discrepancy-separation-energy-theorem}.
\begin{lemma}\label{Mean-potential-is-Holder-continuous-lemma}
Let $M$ be a compact connected $d$-dimensional Riemannian manifold.
Then for $s \in (0,d),$ the mean Riesz $s$-potential $\MeanPotential_M^{(s)}$ 
defined by~\eqref{Mean-potential-def} is H\"older continuous on $M,$ with exponent $(d-s)/(d+1).$
Specifically, for $0 \leqslant t < \min \big( 1, (R_1/3)^{d+1} \big),$ the estimate
\begin{align}
\Abs{ \MeanPotential_M^{(s)}(x) - \MeanPotential_M^{(s)}(x') }
&=
\Order \left( t^{ (d-s)/(d+1) } \right)
\label{eq-Holder-estimate}
\end{align}
holds whenever $\dist( x, x' ) \leqslant t.$
\end{lemma}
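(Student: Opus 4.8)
The plan is to make the proof of Lemma~\ref{Mean-potential-is-continuous-lemma} quantitative and then to optimize the radius of the excised ball as a function of $t$. Fix $x, x' \in M$ with $\dist(x,x') \leqslant t$; the case $t = 0$ gives $x = x'$ and there is nothing to prove, so assume $0 < t < \min\big(1,(R_1/3)^{d+1}\big)$. I would set $r := t^{1/(d+1)}$, so that $r \in (0, R_1/3)$, and note that, since $r < 1$, we have $t = r^{\,d+1} \leqslant r/2$ once $t$ is small enough, hence $r - t \geqslant r/2$. With $\Ball'_r := \Ball(x',r)$, split
\begin{align*}
\MeanPotential_M^{(s)}(x) - \MeanPotential_M^{(s)}(x')
&= \Integral_{\Ball'_r}\big(\Potential_x^{(s)} - \Potential_{x'}^{(s)}\big)
 + \Integral_{M \setminus \Ball'_r}\big(\Potential_x^{(s)} - \Potential_{x'}^{(s)}\big),
\end{align*}
and estimate the two terms separately.

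For the first (near) term, since $\dist(x,x') \leqslant t < 2r$ the point $x'$ lies in $\Ball(x,2r)$, so $\Ball'_r \subseteq \Ball(x,3r)$ with $3r < R_1$; Lemma~\ref{Small-ball-energy-estimate-lemma} then gives $\Integral_{\Ball'_r} \Potential_x^{(s)} \leqslant \Integral_{\Ball(x,3r)} \Potential_x^{(s)} \leqslant 3^{d-s} C_3 r^{d-s}$, and applied directly it gives $\Integral_{\Ball'_r} \Potential_{x'}^{(s)} \leqslant C_3 r^{d-s}$. Since $\Potential_x^{(s)}$ and $\Potential_{x'}^{(s)}$ are nonnegative, the near term is at most $(3^{d-s}+1)\,C_3\, r^{d-s}$ in absolute value. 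For the second (far) term, the triangle inequality for the geodesic distance gives $\big|\dist(x,y) - \dist(x',y)\big| \leqslant \dist(x,x') \leqslant t$ for every $y$, while for $y \notin \Ball'_r$ both $\dist(x',y) \geqslant r$ and $\dist(x,y) \geqslant r - t \geqslant r/2$. Applying the mean value theorem to $\tau \mapsto \tau^{-s}$ on the interval between $\dist(x,y)$ and $\dist(x',y)$ yields $\big|\dist(x,y)^{-s} - \dist(x',y)^{-s}\big| \leqslant s\,(r/2)^{-s-1}\,t$ for all $y \in M \setminus \Ball'_r$, and integrating against the probability measure $\NMeasure_M$ bounds the far term by $s\,2^{s+1}\, r^{-s-1}\, t$.

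Adding the two bounds and using $r^{d+1} = t$, so that $r^{-s-1}t = r^{d-s}$, gives $\big|\MeanPotential_M^{(s)}(x) - \MeanPotential_M^{(s)}(x')\big| \leqslant C\, r^{d-s} = C\, t^{(d-s)/(d+1)}$ for a constant $C$ depending only on $M$ and $s$, which is~\eqref{eq-Holder-estimate}. For the part of the range $\big(0,\min(1,(R_1/3)^{d+1})\big)$ on which $t \leqslant r/2$ may fail, $t$ is bounded away from $0$, so one absorbs the discrepancy into $C$ using that $\MeanPotential_M^{(s)}$ is bounded (Lemma~\ref{Mean-potential-is-continuous-lemma}) while $t^{(d-s)/(d+1)}$ is bounded below there.

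The only genuine choice in the argument is the split radius $r = t^{1/(d+1)}$, dictated by balancing the excised-ball error $r^{d-s}$ (controlled by Lemma~\ref{Small-ball-energy-estimate-lemma}) against the Lipschitz-type error $r^{-s-1}t$ coming from the kernel away from $x'$; equating the two forces exactly the exponent $(d-s)/(d+1)$. I expect the main care needed to be bookkeeping with the radii: one must keep $r$ and $3r$ below $R_1$ so that Lemmas~\ref{Small-ball-estimate-lemma} and~\ref{Small-ball-energy-estimate-lemma} apply, and keep $r - t$ comparable to $r$ so that the mean value estimate does not degrade --- both of which are exactly what the stated restrictions $t < 1$ and $t < (R_1/3)^{d+1}$ (together with shrinking $t$) are designed to guarantee.
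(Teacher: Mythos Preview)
Your proof is correct and follows essentially the same route as the paper's: excise the ball $\Ball(x',r)$, bound the near contribution via Lemma~\ref{Small-ball-energy-estimate-lemma} as $\Order(r^{d-s})$, bound the far contribution by a derivative/binomial estimate as $\Order(t\,r^{-s-1})$, and balance by setting $r = t^{1/(d+1)}$. The only cosmetic differences are your use of the mean value theorem where the paper invokes the binomial expansion of $(1-t/r)^{-s}$, and your more explicit handling of the constraint $r - t \geqslant r/2$.
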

\begin{proof} ~

We prove~\eqref{eq-Holder-estimate} by putting explicit estimates into the proof of 
Lemma~\ref{Mean-potential-is-continuous-lemma} above.
This proof therefore uses the notations and the definitions used there.

Firstly, the proof of Lemma~\ref{Mean-potential-is-continuous-lemma} establishes that 
for $r \in (0,R_{1}/3),$ where $R_{1}$ is a suitable radius 
as per Lemma~\ref{Small-ball-estimate-lemma},
\begin{align*}
\MeanPotential_{\Ball'_r}^{(s)}(x)
\leqslant
C_{4}\ r^{d-s},
\end{align*}
for all $x \in M,$ where $C_{4} := \max( C_H, 3^{d-s}\ C_{3} ).$
This yields the estimate
\begin{align}
\Integral_{\Ball'_r} \Abs{\Potential_x^{(s)} - \Potential_{x'}^{(s)} }
&= \Order(r^{d-s}).
\label{eq-inside-ball-estimate} 
\end{align}

Secondly, let $y \in M$ be such that $\dist(y,x') = r,$
with $0 < r < \min(1, R_{1}/3).$
If $\dist(x,x') = t < r,$ then by the triangle inequality, 
$\dist(x,y) \geqslant r-t,$ and so
\begin{align*}
\Abs{ \dist(x,y)^{-s} - \dist(x',y)^{-s} } &\leqslant (r-t)^{-s} - r^{-s}. 
\end{align*}
From the binomial expansion of $(r-t)^{-s}$ we have
\begin{align*}
(r-t)^{-s} - r^{-s} 
&= r^{-s} \big( (1 - t/r)^{-s} - 1 \big)
= r^{-s} \Order(t/r) = \Order(t r^{-s-1} ).
\end{align*}
We therefore have the estimate
\begin{align}
\Integral_{M \setminus \Ball'_r} \Abs{\Potential_x^{(s)} - \Potential_{x'}^{(s)} }
&= \Order(t r^{-s-1} ).
\label{eq-outside-ball-estimate} 
\end{align}

We can equate the orders of the estimates~\eqref{eq-inside-ball-estimate} and~\eqref{eq-outside-ball-estimate}
by setting $t := r^{d+1}.$
This yields the overall estimate
\begin{align*}
\Integral_{M} \Abs{\Potential_x^{(s)} - \Potential_{x'}^{(s)} }
&= \Order(r^{d-s} ) = \Order( t^{(d-s)/(d+1)} ),
\end{align*}
giving the result~\eqref{eq-Holder-estimate}.

\end{proof}
\begin{Remark}
The result~\eqref{eq-Holder-estimate} and is proof is split into two lemmas,
\ref{Mean-potential-is-continuous-lemma} and~\ref{Mean-potential-is-Holder-continuous-lemma},
to make the exposition easier to understand.
\end{Remark}

\section{Proof of Theorem~\ref{manifold-discrepancy-separation-energy-theorem}}\label{proof-section}

\begin{quiet-proof}

Fix the manifold $M$ and therefore fix $d.$
Fix $s \in (0,d),$ and drop all superscripts $(s)$ from the notation,
where this does not cause confusion.
Fix a sequence $\mathcal{X}$ having the required properties.
Fix $\ell,$ drop all subscripts $\ell,$
and examine the spherical code $X := \{x_1,\ldots,x_N\},$ so that $\Card{X}=N.$
The notation of the proof also uses the abbreviations $\MinDistBound := \MinDistBound(N),$
$\Disc := \Disc(N)$.

The first observation is that
\begin{align*}
\big(\Energy_X - \Energy_M\big)\,\Potential
&=
\Integral_X \MeanPotential_X - \Integral_M \MeanPotential_M
\\
&=
(\Integral_X \MeanPotential_X - \Integral_X \MeanPotential_M)
+
(\Integral_X \MeanPotential_M - \Integral_M \MeanPotential_M)
\\
&=
\Integral_X (\MeanPotential_X - \MeanPotential_M)
+
(\Integral_X - \Integral_M) \MeanPotential_M.
\end{align*}

The first part of the proof concentrates on the convergence to $0$ of the term
$\Integral_X (\MeanPotential_X - \MeanPotential_M).$
Since
\begin{align}
\Integral_X (\MeanPotential_X - \MeanPotential_M)
&=
\frac{1}{N} \sum_{x \in X} (\MeanPotential_X(x) - \MeanPotential_M(x))
\label{eq-potential-difference-sum} 
\end{align}
the proof proceeds by placing a uniform bound on the net mean potential 
$\MeanPotential_X(x) - \MeanPotential_M(x)$ at $x \in X$.
We express this net mean potential as a difference between 
Riemann-Stieltjes integrals, then integrate by parts.

Fix $x \in X.$ 
The volume of the ball $\Ball(x,r)$ with respect to the punctured
normalized counting measure $\NMeasure_X^{[x]}$ is
\begin{align*}
\Volume_X^{[x]} (r)
&:= 
\NMeasure_X^{[x]}\left(\Ball(x,r)\right)
=
\frac{\Card{\Ball(x,r) \cap X} - 1}{N}.
\end{align*}
Using $\Volume_M(r) := \NMeasure_M\big(\Ball(x,r)\big)$ to
denote the volume of $\Ball(x,r)$ with respect to the measure $\NMeasure_M,$
and integrating by parts, yields
\begin{align}
\MeanPotential_X(x) - \MeanPotential_M(x)
&=
\Integral_X^{[x]} \Potential_x - \Integral_M \Potential_x 
\notag
\\
&=
\int_M \Potential\left(\dist(x,y)\right)\, d\NMeasure_X^{[x]}(y)
-
\int_M \Potential\left(\dist(x,y)\right)\, d\NMeasure_M(y)
\notag
\\
&=
\int_0^{\infty} r^{-s}\, d\Volume_X^{[x]}(r)
-
\int_0^{\infty} r^{-s}\, d\Volume_M(r)
\notag
\\
&=
\int_0^{\infty} s r^{-s-1}\, \Volume_X^{[x]}(r)\, dr
-
\int_0^{\infty} s r^{-s-1}\, \Volume_M(r)\, dr
\notag
\\
&=
\int_0^{\infty} s r^{-s-1}\, \big(\Volume_X^{[x]}(r)-\Volume_M(r)\big)\, dr.
\label{eq-potential-difference-integral}
\end{align}

We now bound $\Abs{ \MeanPotential_X(x) - \MeanPotential_M(x) }$ by placing an upper bound on each of 
$\Volume_X^{[x]}(r)-\Volume_M(r)$ and its negative, $\Volume_M(r) - \Volume_X^{[x]}(r).$
%
%
%


Because the minimum distance between points of $X$ 
is bounded below by $\MinDistBound,$ each point of $X$ can be placed in a ball of radius $\MinDistBound/2,$
with no two balls overlapping.
Lemma~\ref{Ball-packing-argument-lemma} then implies that for $r < \diam(M),$
\begin{align*}
\Card{\Ball(x,r) \cap X} 
&\leqslant n_M(x,r+\MinDistBound/2,\MinDistBound/2) \leqslant C_{2}\,(r/\MinDistBound)^d,
\end{align*}
and so
\begin{align*}
\Volume_X^{[x]}(r)
&
\leqslant 
C_{2}\,\MinDistBound^{-d} N^{-1} r^d - N^{-1}.
\end{align*}

Since the normalized ball discrepancy $\Discrepancy(X)$ is bounded above by $\Disc,$
it is also true that for $0 < r < \diam(X),$
\begin{align*}
-\Disc 
&\leqslant 
\Volume_X^{[x]}(r) - \Volume_M(r) + N^{-1}
\leqslant
\Disc.
\end{align*}

Let $\RadiusH : = \gamma \Disc^{1/d}.$
Since $\Disc > N^{-1}$ as a result of \eqref{discrepancy-bound-def}, and since $X$ is well separated
with separation constant $\gamma$,
\begin{align*}
0 &< \MinDistBound < \RadiusH < \diam(M) ,
\end{align*}
for $N$ sufficiently large.
Since the minimum distance between points of $X$ 
is bounded below by $\MinDistBound,$
$\Volume_X^{[x]}(r) = 0$ when $r < \MinDistBound.$
Since $\NMeasure_M$ and $\NMeasure_X$ are probability measures on $M$,
$\Volume_M(r) = 1$ and $\Volume_X^{[x]}(r) = (N-1)/N$ when $r \geqslant \diam(M).$

The simple lower bound $\Volume_M(r) \geqslant 0$ for $0 < r \leqslant \RadiusH,$
and the bounds immediately above 
yield the following cases for the upper bound on $\Volume_X^{[x]}(r) - \Volume_M(r)$:
\begin{align*}
\Volume_X^{[x]}(r) - \Volume_M(r)
&\leqslant 
\begin{cases}
0, & r \in [0, \MinDistBound], 
\\
C_{2}\,\MinDistBound^{-d} N^{-1} r^d - N^{-1}, & r \in (\MinDistBound, \RadiusH),
\\
\Disc - N^{-1}, & r \in [\RadiusH,\diam(M)),
\\
-N^{-1}, & r \geqslant \diam(M).
\end{cases}
\end{align*}

Substitution back into~\eqref{eq-potential-difference-integral} results in the uniform upper bound
\begin{align*}
\MeanPotential_X(x) - \MeanPotential_M(x)
&=
\int_0^{\infty} s r^{-s-1}\, \big(\Volume_X^{[x]}(r)-\Volume_M(r)\big)\, dr
\\
&\leqslant
C_{2}\,\MinDistBound^{-d} N^{-1}\ s \int_{\MinDistBound}^{\RadiusH} r^{d-s-1}\, dr
\\
&\phantom{=}
+
\Disc \int_{\RadiusH}^{\diam(M)}\ s r^{-s-1}\, dr
\ - \ 
N^{-1} \int_{\MinDistBound}^{\infty} s r^{-s-1}\, dr
\\
&=
C_{2}\,\MinDistBound^{-d} N^{-1}\ \frac{s}{d-s} \left({\RadiusH}^{d-s} - {\MinDistBound}^{d-s}\right)
\\
&\phantom{=}
+
\Disc \left( {\RadiusH}^{-s} - {\diam(M)}^{-s} \right)
- N^{-1} {\MinDistBound}^{-s}.
\end{align*}

Noting that $\MinDistBound^d\,N = \gamma^d = \Theta(1),$
substituting in the value for $\RadiusH,$ 
and noting that $\Disc\,N > 1,$ 
results in the bound
\begin{align}
\MeanPotential_X(x) - \MeanPotential_M(x)
&\leqslant
\Order( {\RadiusH}^{d-s} )
+
\Order( {\Disc} {\RadiusH}^{-s} )
=
\Order( \Disc^{1-s/d} ).
\label{eq-upper-bound}
\end{align}


Arguments similar to those for the upper bound on 
$\Volume_X^{[x]}(r) - \Volume_M(r)$ result in the cases
\begin{align*}
\Volume_M(r) - \Volume_X^{[x]}(r)
&\leqslant
\begin{cases}
C_{H} r^d, & r \in [0, \RadiusH], 
\\
\Disc + N^{-1}, & r \in (\RadiusH,\diam(M)),
\\
N^{-1}, & r \geqslant \diam(M).
\end{cases}
\end{align*}

Substitution back into~\eqref{eq-potential-difference-integral} results in the uniform upper bound
\begin{align*}
\MeanPotential_M(x) - \MeanPotential_X(x) 
&=
\int_0^{\infty} s r^{-s-1}\, \big(\Volume_M(r)-\Volume_X^{[x]}(r)\big)\, dr
\\
&\leqslant
C_{H}\ s \int_0^{\RadiusH} r^{d-s-1}\, dr
+
\Disc \int_{\RadiusH}^{\diam(M)} s r^{-s-1}\, dr
+
N^{-1} \int_{\RadiusH}^{\infty} s r^{-s-1}\, dr
\\
&=
C_{H} \frac{s}{d-s} {\RadiusH}^{d-s}
+
\Disc \left( {\RadiusH}^{-s} - {\diam(M)}^{-s} \right)
+ N^{-1} {\RadiusH}^{-s}.
\end{align*}
Similarly to the argument for the upper bound on $\MeanPotential_X(x) - \MeanPotential_M(x)$, 
this gives the bound
\begin{align}
\MeanPotential_M(x) - \MeanPotential_X(x)
&\leqslant
\Order({\RadiusH}^{d-s}) + \Order(\Disc {\RadiusH}^{-s})   + \Order(N^{-1} {\RadiusH}^{-s})
=
\Order(\Disc^{1-s/d}).
\label{eq-lower-bound}
\end{align}

When the upper bounds~\eqref{eq-upper-bound}  
and~\eqref{eq-lower-bound} are combined, this results in the overall order estimate
\begin{align*}
\Abs{ \MeanPotential_X(x) - \MeanPotential_M(x) }
&=
\Order(\Disc^{1-s/d}).
\end{align*}
Therefore, recalling the sum~\eqref{eq-potential-difference-sum}, this shows that
\begin{align}
\Abs{ \Integral_X (\MeanPotential_X - \MeanPotential_M) }
&=
\Order(\Disc^{1-s/d}).
\label{eq-X-X-M-rate}
\end{align}

We now treat the convergence of the term $(\Integral_X - \Integral_M) \MeanPotential_M$ 
to $0$ as $N \to \infty.$

Since $\MeanPotential_M$ is continuous on $M$ 
as per Lemma~\ref{Mean-potential-is-continuous-lemma},
and since the sequence $\mathcal{X}$ is asymptotically equidistributed,
with each measure $\NMeasure_X$ being a probability measure on $M,$
by Bl\"umlinger's Theorem 2~\cite[p. 181]{Blu90},
the term $(\Integral_X - \Integral_M) \MeanPotential_M$ converges to $0$ as $N \to \infty.$
To obtain a rate of convergence for this term, we use Bl\"umlinger's Theorem 1~\cite[p. 180]{Blu90}
along with Lemma~\ref{Mean-potential-is-Holder-continuous-lemma}.

We adopt Bl\"umlinger's notation, and set
$f := \MeanPotential_M,$ $\Measure := \Measure_M,$ $\nu := \Measure(M) \NMeasure_X.$
We also adjust Bl\"umlinger's estimate to take into account that in our case
$\Measure(M)$ is not necessarily 1.
The estimate in this case is
\begin{align}
\Abs{ \nu( f ) - \Measure( f ) } &\leqslant T_1(r) + T_2(r) + T_3(r),
\label{eq-Blumlinger-Theorem-1-estimate}
\end{align}
where
\begin{align*}
T_1(r) &:= 
\norm{ f - f_r } \norm{ \nu },
\\
T_2(r) &:= 
\norm{ f } (\norm{ \nu } + \norm{ \Measure  }) 
\sup_{x \in M} \Abs{\frac{\Measure\big(\Ball(x,r)\big)}{\Volume_d(r)}-1},
\\
T_3(r) &:= 
\frac{\norm f }{ \Volume_d(r) } \int_M \Abs{ \nu\big( B(x,r) \big) - \Measure( B(x,r) \big) }\ d\, \Measure(x).
\end{align*}
The norm used here is $\norm{\cdot}_{\infty},$ the norm on $C(M).$
Therefore $\norm{ \nu } = \norm{ \Measure } = \Measure(M).$

We now estimate the order of each term with respect to $r$ and the discrepancy bound $\Disc.$
For $T_1(r)$ we find the extrema of $f$ and $f_r$ on $M.$
From Bl\"umlinger's definition of $f_r,$~\cite[p. 179]{Blu90}
we see that $f_r$ is the mean on $B(x,r)$ of $f$ with respect to $\Measure.$
It therefore holds that
\begin{align*}
\min_{x \in B(x,r)} f(x) &\leqslant f_r(x) \leqslant \max_{x \in B(x,r)} f(x).  
\end{align*}
Recalling that $f = \MeanPotential_M,$ and applying the estimate~\eqref{eq-Holder-estimate}
from Lemma~\ref{Mean-potential-is-Holder-continuous-lemma}, we obtain
\begin{align*}
\norm{ f - f_r } &= \Order( r^{(d-s)/(d+1)} ) 
\end{align*}
for $r$ sufficiently small.
Therefore
\begin{align}
T_1(r) &=  \Order( r^{(d-s)/(d+1)} ). 
\label{eq-T-1-estimate}
\end{align}

For $T_2(r),$ Bl\"umlinger's estimate~\eqref{eq-Blumlinger-estimate}
as per Lemma~\ref{Blumlinger-estimate-lemma} yields
$T_2(r) =  \Order( r^2 ).$

For $T_3(r),$ first note that
\begin{align*}
\frac{\norm{f}}{\Volume_d(r)} &= \Theta( r^{-d} ). 
\end{align*}
Since
$\Abs{ \nu\big( B(x,r) \big) - \Measure( B(x,r) \big) } \leqslant \Disc \Measure(M),$ 
this yields
\begin{align}
T_3(r) &= \Order( \Disc r^{-d} ). 
\label{eq-T-3-estimate}
\end{align}

To equate the orders of the estimates 
\eqref{eq-T-1-estimate} and~\eqref{eq-T-3-estimate} 
for $T_1(r)$ and $T_3(r),$
we set $r = \Disc^{(d+1)/(d^2 + 2 d - s)}.$
This results in an overall estimate of
\begin{align}
\Abs{ (\Integral_X - \Integral_M) \MeanPotential_M }
&=
\frac{1}{\Measure(M)} \Abs{ \nu( f ) - \Measure( f ) }
= \Order \big( \Disc^{(d-s)/(d^2 + 2 d - s)} \big).
\label{X-M-M-rate}
\end{align}

The estimates~\eqref{eq-X-X-M-rate} and~\eqref{X-M-M-rate} combine to
yield the estimate~\eqref{eq-manifold-discrepancy-separation-energy-estimate}.
\end{quiet-proof}

\section{Discussion}

Theorem~\ref{manifold-discrepancy-separation-energy-theorem} 
demonstrates the convergence of the normalized Riesz $s$-energy
of a well separated, equidistributed sequence of $M$-codes on 
a compact connected $d$-dim\-ensional Riemannian manifold $M$
to the energy given by the double integral of the normalized volume measure on $M,$
in the case where $0 < s < d.$
The estimated rate of convergence given by the theorem
is much slower than the corresponding rate of $\Disc^{1-s/d}$ on the sphere~\cite{Leo13}.

The proof of Theorem~\ref{manifold-discrepancy-separation-energy-theorem} relies on
the estimate~\eqref{eq-Blumlinger-Theorem-1-estimate} from
Bl\"umlinger's Theorem 1~\cite{Blu90}.
This, to some extent, resembles a Koksma-Hlawka-type inequality,
in that it contains three terms, each of which separate 
the dependence on the function and 
the dependence on the measure into different factors.
One key difference between the estimate~\eqref{eq-Blumlinger-Theorem-1-estimate} and
a Koksma-Hlawka-type inequality is that the term $T_3$ has $\Volume_d(r)$ in the denominator.
This makes it difficult to apply this estimate to the case of arbitrarily small positive $r.$

If the manifold $M$ actually had a Koksma-Hlawka-type inequality for the ball discrepancy $\Disc,$
with a function space $F_M$ containing the function $\MeanPotential_M,$
the estimate
\begin{align*}
\Abs{(\Integral_X - \Integral_M) \MeanPotential_M}
&\leqslant
\Disc\ V\big(\MeanPotential_M\big)
\end{align*}
would hold for some appropriate functional $V$ on the space $F_M.$
Unfortunately, not much is known about Koksma-Hlawka type inequalities for geodesic
balls on compact connected Riemannian manifolds, with the exception of
the sphere $\Sphere^d$~\cite[Section 3.2, p. 490]{BraD12}~\cite[Proposition 20]{BraSSW12}.

The papers by Brandolini et al.~\cite{BraCCGST10,BraCGT11} 
examine Koskma-Hlawka type inequalities on compact Riemannian manifolds.
The main results of those two papers concern discrepancies which are 
not in general the same as the geodesic ball discrepancy,
but they do suggest directions for further research.

Further research could address the following questions.
\begin{enumerate}
\item
For a compact connected Riemannian manifold $M$,
for what linear spaces $F_M$ does a Koksma-Hlawka type inequality 
\begin{align}
\Abs{(\Integral_X - \Integral_M) f}  
&\leqslant
\Discrepancy(X)\ V(f)
\label{eq-koksma-hlawka}
\end{align}
hold for all $f \in F_M,$
where the relevant discrepancy in the inequality is the geodesic ball discrepancy?
\item
What is the appropriate functional $V$ in~\eqref{eq-koksma-hlawka}?
Is $V$ a norm or a semi-norm on the function space $F_M$?
\item
For which compact connected Riemannian manifolds $M$ does the 
Koksma-Hlawka function space $F_M$ contain the mean potential function $\MeanPotential_M$?
\end{enumerate}

Finally, no mention has yet been made of constructions for, 
or even the existence of, well separated, admissible sequences on 
compact connected Riemannian manifolds.
The case of the unit sphere $\Sphere^d$ has been well studied~\cite{Leo13} and a 
number of constructions are known, including one that uses a partition of the sphere
into regions of equal volume and bounded diameter~\cite{Leo07Thesis}.

Damelin et al. have studied the discrepancy and energy of finite sets contained 
within measurable subsets of Hausdorff dimension $d$ 
embedded in a higher dimensional Euclidean space, 
where the energy and discrepancy are both
defined via an admissible kernel~\cite{DamHRZ10}.
One of their key results is to express the discrepancy of a finite set with respect to an 
equilibrium measure as the square root of the difference between 
the energy of the finite set and the energy of the equilibrium measure 
\cite[Corollary 10]{DamHRZ10}.
They have also studied the special case where both the measurable subset and
the kernel are invariant under the action of a group~\cite[Section 4.3]{DamHRZ10}.
This case includes compact homogeneous manifolds~\cite{DamLRS09}.

The methods of Damelin et al. might be used to prove the equidistribution of 
a sequence of $M$-codes $\mathcal{X^{\ast}},$ 
where each code $X_{\ell}^{\ast}$ has the minimum Riesz $s$-energy of 
all codes of cardinality $\Card{X_{\ell}^{\ast}}.$
Much care must be taken: although their definition of an admissible
kernel includes the Riesz $s$-kernels as defined in this paper~\cite[Section 2.1]{DamHRZ10}, 
their definitions and results are framed in terms of sets embedded in Euclidean space,
their definition of discrepancy is given in terms of a norm depending on the kernel~\cite[(8)]{DamHRZ10},
the measure used in their Corollary 10 is the equilibrium measure, not the uniform measure,
and their definition of energy includes the diagonal terms excluded in this paper,
so that the energy of the Riesz $s$-kernel on a finite set is infinite~\cite[(5) and Section 3]{DamHRZ10}.

Brandolini et al.~\cite[p. 2]{BraCCGST10} give an example 
where the existence of a partition of the manifold $M$ into $N$ regions,
each with volume $N^{-1}$ and diameter at most $c N^{-1/d},$ yields an $M$-code
$X$ obtained by selecting one point from each region, and this gives a bound
on the quadrature error of the code $X$ with respect to bounded functions on the manifold $M$.
Such a partition might be constructed by adapting the modified Feige-Schechtman 
partition algorithm for the unit sphere~\cite{FeiS02} 
\cite[3.11.4, pp. 145-148]{Leo07Thesis}.  
Care must be taken to adapt the algorithm, in particular to choose an appropriate
radius for the initial saturated packing of the manifold $M$ by balls of a fixed radius.
Also, it would need to be proven that the adapted algorithm works for all 
compact connected Riemannian manifolds and all cardinalities $N$.

A recent paper by Ortega-Cerd\`a and Pridhnani~\cite{OrtP12} treats
sequences of Fekete point sets on some types of smooth compact connected
Riemannian manifolds $M,$ showing that such sequences are 
uniformly separated~\cite[Theorem 9]{OrtP12}  and 
asymptotically equidistributed~\cite[Theorem 11]{OrtP12}.
Uniform separation~\cite[p. 2106]{OrtP12} is defined in terms of the orthonormal basis for $L^2(M)$ 
consisting of eigenfunctions of the Laplacian operator on $M$, as opposed to the
purely geometric concept of well-separation used in this paper.
Fekete point sets are defined by maximizing the determinant of a Vandermonde matrix defined by
the values of each of the basis eigenfunctions at each of the points~\cite[Definition 8]{OrtP12}.
These point sets are therefore determined numerically by using optimization methods rather than by
construction.

Clearly, further research is needed to address the 
construction on compact connected Riemannian manifolds of sequences of 
point sets that are both equidistributed and well-separated.

\subsection*{Acknowledgements}
The current work was conducted at ANU and preliminary findings were
presented at the 3rd DWCAA at Canazei in 2012.
The support of the Australian Research Council 
under its Centre of Excellence program is gratefully acknowledged.
Thanks to Ed Saff, who posed the original problem on the sphere.
Thanks to Stefano De Marchi, Alvise Sommariva and Marco Vianello 
for hosting the author on a visit to the University of Padova in 2012.
Thanks to Leonardo Colzani and Giacomo Gigante 
for an updated version of the joint paper~\cite{BraCCGST10}.
Thanks also to Lashi Bandara, Johann Brauchart, Julie Clutterbuck, Thierry Coulhon, 
Mathew Langford, Ed Saff and David Shellard for valuable discussions.
One of my anonymous reviewers suggested improvements and simplifications of the arguments
given in an earlier draft of this paper, and encouraged me to
``look for a more quantitative result.''
This prompted me to re-examine Bl\"umlinger's paper~\cite{Blu90}
and take advantage of the estimates given there in Theorem 1,
resulting in the current version of Theorem \ref{manifold-discrepancy-separation-energy-theorem}
and its proof.
%
\noindent

%
\end{document}